\newtheorem{theorem}{Theorem}[section]
\newtheorem{lemma}[theorem]{Lemma}
\author[Deyu Wu et. al]{Deyu Wu\affiliationmark{1}
  \and Yipei Zhang\affiliationmark{2}\thanks{Corresponding author. Email address: zyipei@163.com}
  \and Xiumei Wang\affiliationmark{3}}
\title[A note on removable edges in near-bricks]{A note on removable edges in near-bricks}
\affiliation{
  % one line per affiliation, no postal codes, grant numbers or similar
  Department of Basic Courses, Zhengzhou University of Science and Technology, Zhengzhou, China\\
  School of Mathematics and Statistics, North China University of Water Resources and Electric Power, Zhengzhou, China\\
  School of Mathematics and Statistics, Zhengzhou University, Zhengzhou, China}
\keywords{removable edge, perfect matching, near-brick, brick}
\begin{document}
% This is only used if you are compiling for a volume before vol 25
% \publicationdetails{VOL}{2015}{ISS}{NUM}{SUBM}
% This is the new form of collecting the data, starting with vol 25
\publicationdata{vol. 26:2}{2024}{8}{10.46298/dmtcs.11747}{2023-08-21; 2023-08-21; 2024-03-05}{2024-04-10}
%\publicationdata
%{vol. 25:3 special issue for main purpose}
%{2022}
%{1}
%{10.46298/dmtcs.10472}
%%{1998-10-14; 1998-10-14; 2002-07-19; 2014-02-05; 2015-09-09; 2022-12-25}
%%{2022-12-3}
%{2022-12-3; None}
%{2023-1-1}
\maketitle
\begin{abstract}
  An edge $e$ of a matching covered graph $G$ is removable if $G-e$ is also matching covered. Carvalho, Lucchesi, and Murty showed that every brick $G$ different from $K_4$ and $\overline{C_6}$ has at least $\Delta-2$ removable edges, where $\Delta$ is the maximum degree of $G$. In this paper, we generalize the result to irreducible near-bricks, where a graph is irreducible if it contains no single ear of length three or more.
\end{abstract}

%DMTCS is an open access scientific is implemented by the
%\emph{episcience} platform, see \cite{berthaud:hal-01002815} for an
%overview of the strategy. It combines high scientific and editorial
%quality with an open access policy. It is priceless, neither authors
%nor readers pay money for the access. Access is granted by giving
%episcience an irrevocable license to publish the articles, the
%copyright remains with the authors. The platform itself is run by
%French government services that do their best to warrant continuous
%access and a high quality of service.
%
%This document describes the use of the \texttt{dmtcs-episcience.cls}
%document class. It has to be used \emph{for all DMTCS publications}.

\section{Introduction}
\label{sec:in}
All the graphs considered in this paper may have multiple edges, but no
loops. We follow \cite{BM08} for undefined notations and terminologies.
Let $G$ be a graph with the vertex set $V(G)$ and the edge set $E(G)$.
We denote by $\Delta(G)$, or simply $\Delta$, the maximum degree of the vertices of $G$.
For a subset $X$ of $V(G)$, let $G[X]$ denote the subgraph of $G$ induced by $X$.
A \emph{matching} of a graph is a set of pairwise nonadjacent edges.
A \emph{perfect matching} is one which covers every vertex of the graph.
A nontrivial connected graph is \emph{matching covered} if each edge lies in a perfect matching of the graph. Clearly, every matching covered graph different from $K_2$ is 2-connected.

For a nonempty proper subset $X$ of $V(G)$, let $\partial(X)$ denote the set of all the edges of $G$ with one end in $X$ and the other end in $\overline{X}$, where $\overline{X}:=V(G)\setminus X$.
The set $\partial(X)$ is called a {\it cut} of $G$, the sets $X$ and $\overline{X}$ its shores.
The shore $X$  of $\partial(X)$  is {\it bipartite} if the induced subgraph $G[X]$ is bipartite.
A cut is {\it trivial} if one of its shores is a singleton, and is \emph{nontrivial} otherwise.
We denote by $G/(X\rightarrow x)$, or simply $G/X$, the graph obtained from $G$ by shrinking $X$ to a single vertex $x$.
Similarly, we denote by $G/(\overline{X}\rightarrow \overline{x})$, or simply $G/\overline{X}$,  the graph obtained from $G$ by shrinking $\overline{X}$ to a single vertex $\overline{x}$.
The two graphs $G/X$ and $G/\overline{X}$ are called the two $\partial(X)$-{\it contractions} of $G$.

Let $G$ be a matching covered graph. A cut $C$ of $G$ is \emph{tight} if $|M\cap C|=1$ for each perfect matching $M$ of $G$. A matching covered graph which is free of nontrivial tight cuts is a \emph{brace} if it is bipartite, and is a \emph{brick} otherwise.
If $G$ has a nontrivial tight cut $C$, then each $C$-contraction of $G$ is a matching covered graph that has strictly fewer vertices than $G$.
Continuing  in this way, we can obtain a list of matching covered graphs without nontrivial tight cuts, which are bricks and braces.
This procedure is known as a {\it tight cut decomposition} of $G$.
In general, a matching covered graph may admit several tight cut decompositions.
\cite{Lovasz1987} showed  that any two tight cut decompositions of a matching covered graph yield the same list of bricks and braces (up to multiple edges).
This implies that the number of bricks is uniquely determined by $G$.
Let $b(G)$ denote the number of the bricks of $G$.
Note that $b(G)=0$ if and only if $G$ is bipartite.

A graph $G$ is a {\it near-brick} if it is a matching covered graph with $b(G)=1$. Clearly, a near-brick is 2-connected and a brick is a near-brick. 
A {\it single ear} of a graph is a path of odd length whose internal vertices (if any) all
have degree two in this graph.
A graph is {\it irreducible} if it contains no single ear of length three or more.
\cite{ELP1982} proved that a graph $G$ is a brick if and only if $G$ is 3-connected and $G-x-y$ has a perfect matching for any two distinct vertices
$x,y\in V(G)$.
Therefore, a brick is irreducible. However, a near-brick is not necessarily irreducible.
For instance, subdividing an edge of a graph in Figure \ref{fig1} by  inserting two vertices results in a near-brick, which is not irreducible.

\begin{figure}[h]
 \centering
 \includegraphics[width=0.6\textwidth]{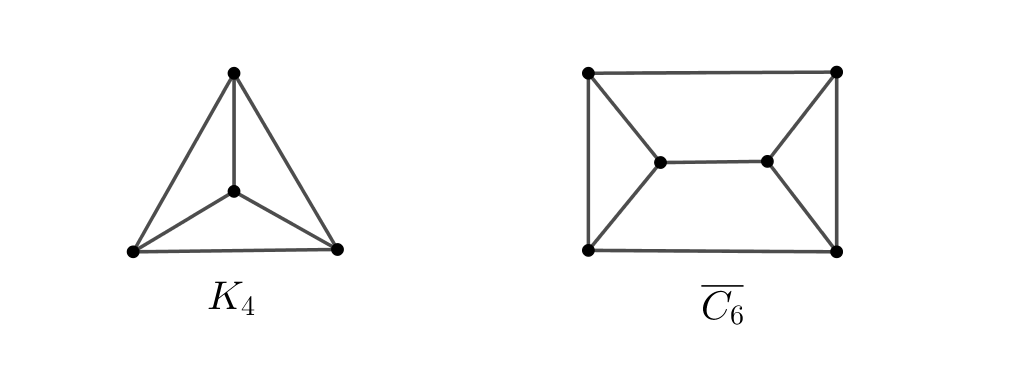}\\
 \caption{The two bricks.}\label{fig1}
\end{figure}

An edge $e$ of a matching covered graph $G$ is {\it removable} if $G-e$ is also matching covered, and is {\it nonremovable} otherwise.
Clearly, each multiple edge of a matching covered graph is in fact a removable edge.
The notion of removable edge is related to ear decompositions of
matching covered graphs introduced by Lov\'{a}sz and Plummer. \cite{Lovasz1987} showed that every brick distinct from $K_4$ and $\overline{C_6}$ has a removable edge, where $K_4$ and $\overline{C_6}$ are shown in Figure \ref{fig1}. Carvalho, Lucchesi, and Murty proved the following stronger result.

\begin{theorem}[\cite{CLM1999}]\label{brick}
Every brick $G$ different from $K_4$ and $\overline{C_6}$ has at least $\Delta-2$ removable edges.
\end{theorem}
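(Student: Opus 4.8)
The plan is to reduce the whole statement to a single local claim about a vertex of maximum degree. Since Lov\'{a}sz's theorem (quoted above) already produces a removable edge whenever $G\notin\{K_4,\overline{C_6}\}$, and $\Delta-2=1$ when $\Delta=3$, the case $\Delta(G)=3$ is finished. So I may assume $\Delta(G)\ge 4$, fix a vertex $v$ with $\deg(v)=\Delta$, and aim to prove that at most two of the $\Delta$ edges incident with $v$ are nonremovable; the remaining $\ge\Delta-2$ edges at $v$ are then the required removable edges of $G$.

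First I would record a convenient description of nonremovability valid inside a brick. As $G$ is $3$-connected, $G-e$ is always $2$-connected, and (since some perfect matching avoids $e$) it has a perfect matching, so $e=uv$ is nonremovable exactly when some edge $f=xy$ of $G-e$ lies in no perfect matching of $G-e$. Using that a brick is bicritical (the characterization of \cite{ELP1982}), one checks that such an $f$ cannot be incident with $u$ or $v$, and that ``$f$ lies in no perfect matching of $G-e$'' is equivalent to ``every perfect matching of $G-x-y$ contains $e$''. Thus $e$ is nonremovable iff there is an edge $f$, vertex-disjoint from $e$, with $e$ in every perfect matching of $G-x-y$. Applying Tutte's theorem to $G-x-y$ and its spanning subgraph $(G-x-y)-e$ (which has no perfect matching), one extracts a set $B=B(f)\subseteq V(G)\setminus\{x,y\}$ such that $(G-x-y)-B$ has exactly $|B|$ odd components, and the even component $K$ containing $u$ and $v$ has $e$ as a bridge separating its two odd halves; in particular every neighbour of $v$ surviving the deletion lies in $B$ or on $v$'s side of that bridge. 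This ``barrier $+$ bridge'' picture is the main tool.

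Now suppose for contradiction that $vu_1,vu_2,vu_3$ are all nonremovable, with witnessing edges $f_1,f_2,f_3$ and associated sets $B_1,B_2,B_3$; pick a fourth neighbour $u_4$ of $v$, which exists because $\deg(v)\ge 4$. The three barrier/bridge configurations must all surround $v$ simultaneously, and the idea is to play them against one another: tracking where each $u_j$ (and $u_4$ in particular) can sit relative to each $B_i$, one shows the three configurations cannot be made consistent without producing a $2$-cut of $G$ (contradicting $3$-connectivity), a vertex set violating bicriticality, or forcing $G$ down to $K_4$ or $\overline{C_6}$ -- the two cases excluded by hypothesis. (That $\deg(v)\ge 4$ is genuinely needed is shown by $K_4$, by $\overline{C_6}$, and by $K_4$ with a doubled edge, all of which have a degree-$3$ vertex meeting three nonremovable edges.)

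The main obstacle is precisely this last step: turning three local barrier/bridge pictures into one global contradiction. The delicate points are that the witnesses $f_1,f_2,f_3$ may share vertices with each other or with the $u_j$'s, that the even components $K_1,K_2,K_3$ may be large and overlap intricately, and that the exceptional graphs must be recognized rather than excluded by fiat. A possibly cleaner route, which I would try first, is to reformulate nonremovability of $vu_i$ as: the edge $f_i$ of the factor-critical graph $G-v$ is allowed in exactly one of $(G-v)-u_1,\dots,(G-v)-u_\Delta$; one would then try to bound the number of such ``uniquely placed'' edges directly from an odd-ear decomposition of $G-v$, falling back on the barrier analysis if that stalls.
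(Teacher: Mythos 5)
The statement you are proving is not proved in this paper at all: it is quoted from \cite{CLM1999} and used as a black box (the paper's own contribution, Theorem~1.2, is the extension to irreducible near-bricks). So the only question is whether your proposal stands on its own, and it does not: it is an outline whose central step is missing. Your reduction is sound as far as it goes --- the case $\Delta=3$ does follow from Lov\'asz's one-removable-edge theorem, and your ``barrier $+$ bridge'' description of a nonremovable edge $e=uv$ is correct (one can verify that a witness edge $f=xy$ cannot meet $u$ or $v$ because bricks are bicritical, that nonremovability of $e$ amounts to every perfect matching of $G-x-y$ using $e$, and that Tutte's theorem then yields a set $B$ with $o((G-x-y)-e-B)=|B|+2$ in which $e$ bridges two odd pieces). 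But the theorem's entire content is the claim that a vertex $v$ with $d(v)\ge 4$ cannot be incident with three nonremovable edges, and at exactly that point you write ``one shows the three configurations cannot be made consistent'' and then concede that ``the main obstacle is precisely this last step.'' Nothing in the proposal explains how the three barriers $B_1,B_2,B_3$ and their bridge components interact, how a $2$-cut or a violation of bicriticality is actually extracted, or where the hypothesis $G\notin\{K_4,\overline{C_6}\}$ enters. The fallback idea (counting ``uniquely placed'' edges via an odd-ear decomposition of the factor-critical graph $G-v$) is even more tentative. An asserted contradiction is not a proof, and here the assertion is the theorem.

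Two smaller cautions if you pursue this. First, the local claim is genuinely degree-sensitive: in the cubic brick $R_8$ (Figure~2(c) of the paper) most vertices meet three nonremovable edges, so any argument you devise must visibly use $d(v)\ge 4$ and not merely $3$-connectivity plus bicriticality; your parenthetical list of degree-$3$ counterexamples shows you are aware of this, but awareness is not the same as locating where the extra neighbour $u_4$ breaks the configuration. Second, since the paper allows multiple edges, you should record explicitly that parallel edges are always removable, so that three nonremovable edges at $v$ do give three distinct neighbours and a fourth distinct neighbour exists; this is easy but currently implicit.
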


The following theorem is our main result which generalizes the above theorem to irreducible near-bricks.

\begin{theorem}\label{main:result}
Every irreducible near-brick $G$ different from $K_4$ and $\overline{C_6}$ has at least $\Delta-2$ removable edges.
\end{theorem}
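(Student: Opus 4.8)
The plan is to induct on $|V(G)|$, peeling off one nontrivial tight cut at a time until we reach the unique brick of $G$, to which Theorem~\ref{brick} applies. Throughout, write $r(\cdot)$ for the number of removable edges.

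The first ingredient is a transfer principle across a nontrivial tight cut $\partial(X)$. Deleting an edge $e\notin\partial(X)$ commutes with the contractions $G\mapsto G/X$ and $G\mapsto G/\overline X$; the graph $G/X$ does not depend on the edges lying inside $X$; and if $G-e$ is matching covered then $\partial(X)$ (or $\partial(X)\setminus\{e\}$, when $e\in\partial(X)$) is still a nontrivial tight cut of $G-e$. Combining these with the two facts that both contractions of a matching covered graph along a tight cut are matching covered, and that conversely a graph whose contractions along some cut are both matching covered is itself matching covered, one deduces: for $e\in E(G[X])$, $e$ is removable in $G$ if and only if $e$ is removable in $G/\overline X$; and for $e\in\partial(X)$, $e$ is removable in $G$ if and only if its images in $G/X$ and in $G/\overline X$ are both removable. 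Partitioning $E(G)=E(G[X])\cup E(G[\overline X])\cup\partial(X)$ and applying inclusion--exclusion on $\partial(X)$ then yields the counting inequality
\[
  r(G)\ \ge\ r(G/X)+r(G/\overline X)-|\partial(X)| .
\]

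Now let $G$ be an irreducible near-brick with $G\neq K_4,\overline{C_6}$. If $G$ is a brick we are done by Theorem~\ref{brick}. Otherwise $G$ has a nontrivial tight cut $\partial(X)$, and since $b(G/X)+b(G/\overline X)=b(G)=1$, one contraction --- say $B:=G/X$ --- is bipartite and matching covered, while $H:=G/\overline X$ is a near-brick with fewer vertices than $G$. I would then show that, when $|\partial(X)|\ge 3$, both $H$ and $B$ are again irreducible: a single ear of length $\ge 3$ in $H$ that misses the contracted vertex $\overline x$ is verbatim a single ear of $G$, while one meeting $\overline x$ must have $\overline x$ as an endpoint (because $\deg_H(\overline x)=|\partial(X)|\ge 3$ rules out $\overline x$ being an internal vertex of a single ear) and therefore extends, through the corresponding edge of $\partial(X)$, to a single ear of $G$ --- contradicting the irreducibility of $G$; and symmetrically for $B$. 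To make the induction close I would carry along the companion statement that every irreducible bipartite matching covered graph other than $K_2$ has at least $\Delta$ removable edges, proving the two statements together (the companion statement being itself handled by the same tight-cut reduction, with braces as the base case). Granting this, the inductive step is finished by a short case analysis: a vertex of degree $\Delta(G)$ lies in $X$ or in $\overline X$ and retains its degree in the corresponding contraction, so $\Delta(H)\ge\Delta(G)$ or $\Delta(B)\ge\Delta(G)$; since $H\neq K_4,\overline{C_6}$ gives $r(H)\ge\Delta(H)-2$ by induction, $r(B)\ge\Delta(B)\ge|\partial(X)|$ by the companion statement, and $\Delta(H)\ge|\partial(X)|$ trivially, the counting inequality gives $r(G)\ge\Delta(G)-2$ in either case.

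Two configurations have to be handled outside this clean recursion. First, a nontrivial tight cut with $|\partial(X)|=2$: here the single-ear lifting can fail because $\overline x$ then has degree $2$, so $H$ and $B$ need not be irreducible; one should argue directly, using that a tight $2$-edge-cut imposes very rigid structure on the shores. Second, the case $H\in\{K_4,\overline{C_6}\}$, where Theorem~\ref{brick} is unavailable: then $G$ arises from $K_4$ or $\overline{C_6}$ by expanding a single vertex into a bipartite matching covered graph, and one can display $\Delta(G)-2$ removable edges by hand (most edges of the small brick, together with suitable edges inside the expansion gadget). I expect the real obstacle to be the companion statement about irreducible bipartite matching covered graphs, and specifically its base case for braces: arbitrary braces can have very few removable edges, so it is precisely irreducibility that must be exploited there. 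The $2$-edge-cut and small-brick cases are, by comparison, routine but require careful bookkeeping.
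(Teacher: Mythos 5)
Your overall architecture --- induction, contraction along a nontrivial tight cut, the transfer of removability across tight cuts (your ``transfer principle'' is exactly Lemma~\ref{removable-tight-cut}), and the counting inequality $r(G)\ge r(G/X)+r(G/\overline X)-|\partial(X)|$ --- is sound and close in spirit to the paper. But the proof as written has a load-bearing gap that you yourself flag: the ``companion statement'' that every irreducible bipartite matching covered graph other than $K_2$ has at least $\Delta$ removable edges is never proved, and your inductive step collapses without it (it is precisely what supplies $r(B)\ge|\partial(X)|$ in one case and $r(B)\ge\Delta(G)$ in the other). Proving it ``by the same tight-cut reduction'' runs into the same obstruction you already noticed for the main induction: when $|\partial(X)|=2$ the contracted vertex has degree $2$, the single-ear lifting fails, and the contractions need not be irreducible, so the induction hypothesis does not apply. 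The deferred cases ($|\partial(X)|=2$ and $G/\overline X\in\{K_4,\overline{C_6}\}$) are likewise asserted to be routine but not done, and you never address multiple edges, which the paper must treat separately (its Case~1, via the bisubdivision Lemma~\ref{bisubdivision}) because deleting a parallel edge or contracting can interact badly with irreducibility and with the degree counts.

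The paper avoids the need for any general theorem about irreducible bipartite matching covered graphs by a different choice of cut: it takes the bipartite shore $X$ \emph{minimal}, which forces the bipartite contraction $G_2=G/\overline X$ to be a \emph{brace}. Then Lemma~\ref{brace} (every edge of a brace on six or more vertices is removable) makes every edge of $G[X]$ removable in $G$, and Lemma~\ref{brace-3-connected} gives $|\partial(X)|=d_{G_2}(\overline x)\ge 3$ for free, killing your $|\partial(X)|=2$ configuration; the four-vertex brace case is disposed of by a short degree argument using irreducibility and simplicity. With all of $E(G[X])$ removable, the paper then counts removable edges at a maximum-degree vertex directly rather than through a global bound on $r(B)$. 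If you want to rescue your route, the minimality trick is the missing idea: it converts your hard companion statement into the known brace lemma. As it stands, the proposal is a plausible plan rather than a proof.
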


The paper is organized as follows. In Section \ref{sec:Preliminaries}, we present some basic results. In Section \ref{sec:Proof}, we give a proof of Theorem \ref{main:result}.

\section{Preliminaries}
\label{sec:Preliminaries}
\begin{lemma}[\cite{CLM1999}]\label{brace}
In a brace on six or more vertices, every edge is removable.
\end{lemma}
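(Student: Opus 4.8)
The plan is to fix an arbitrary edge $e = uv$ of the brace $G$ (with bipartition $(A,B)$, $u \in A$, $v \in B$, and $|A| = |B| = n \ge 3$ since $G$ has at least six vertices) and to show directly that $G - e$ is matching covered. By definition this means $G - e$ is connected and every one of its edges lies in a perfect matching of $G-e$. Connectivity is the easy half: $G$ is matching covered and not $K_2$, hence $2$-connected, hence $2$-edge-connected, so $G - e$ remains connected. The substance is therefore to prove that every edge $f \ne e$ of $G$ lies in a perfect matching of $G$ that avoids $e$. I would exploit the standard characterization of braces on at least six vertices: $G$ is $2$-extendable, i.e.\ every matching of size two extends to a perfect matching; equivalently, $G - a_1 - a_2 - b_1 - b_2$ has a perfect matching for all distinct $a_1, a_2 \in A$ and distinct $b_1, b_2 \in B$. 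This is exactly where the hypothesis of six or more vertices enters, since $2$-extendability is vacuous for $K_{2,2}$.

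First I would record a consequence of this characterization: every vertex of $G$ has degree at least $3$. Indeed, if some $u \in A$ had $N(u) = \{b_1, b_2\}$, then deleting $b_1, b_2$ together with any two vertices of $A \setminus \{u\}$ (which exist because $n \ge 3$) would isolate $u$, contradicting the existence of a perfect matching after such a deletion. With minimum degree at least $3$ in hand, I fix $f \ne e$ and split into two cases. If $f$ shares an endpoint with $e$, then any perfect matching containing $f$ (one exists since $G$ is matching covered) automatically misses $e$, and we are done. If $f = xy$ is disjoint from $e$, then since $\deg(u) \ge 3$ I may choose a neighbour $w$ of $u$ with $w \notin \{v, y\}$. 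The edges $f = xy$ and $g = uw$ are independent, so by $2$-extendability they lie in a common perfect matching $M$; as $u$ is matched to $w \ne v$ in $M$, we have $e \notin M$ while $f \in M$, exactly as required.

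The main obstacle, and the only point where the structure of braces is genuinely used, is the disjoint case: one must guarantee that $u$ has a neighbour outside $\{v, y\}$, that is, rule out the degenerate configuration in which $u$ and $v$ both have degree $2$ and lie on a quadrilateral $u, v, x, y$ together with $f$. In that configuration $f \in M$ forces $u$ to be matched to $v$, so $e$ would be non-removable; the content of the argument is that such a configuration cannot occur in a brace on six or more vertices. I would dispose of it through the degree bound above, but it can equally be seen directly: a hypothetical non-removable edge $e$ produces an edge $f = xy$ such that every perfect matching through $f$ uses $e$, and one can then exhibit a nontrivial tight cut of $G$ — for instance $\partial(\{u, v, y\})$ in the degenerate configuration, whose two shores both have at least two vertices — contradicting the assumption that $G$ is a brace. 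Verifying that the stated $2$-extendability characterization follows from freeness of nontrivial tight cuts, or alternatively carrying out the tight-cut construction in full generality, is the technical heart on which the rest of the argument rests.
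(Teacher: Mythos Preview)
The paper does not prove this lemma; it is quoted as a known result of Carvalho, Lucchesi and Murty and simply cited. So there is no ``paper's own proof'' to compare against.

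Your argument is correct and is essentially the standard one. The key input is the characterization of braces on at least six vertices as the $2$-extendable bipartite graphs (equivalently, $G - a_1 - a_2 - b_1 - b_2$ has a perfect matching for any two vertices from each colour class); once that is granted, the minimum-degree-$3$ observation and the two-case analysis you give are clean and complete. In Case~2 it is worth stating explicitly that $f = xy$ has, say, $x \in A$ and $y \in B$, so that $w \in N(u) \subseteq B$ automatically avoids $x$ and the pair $\{xy, uw\}$ really is a matching of size two --- you implicitly use the bipartition here. The alternative tight-cut argument you sketch at the end is unnecessary given the degree bound, and the specific cut $\partial(\{u,v,y\})$ you name would need more justification to be shown tight; I would simply drop that paragraph. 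The one genuine debt you acknowledge is the $2$-extendability characterization itself, which is where the real work lies; that equivalence is classical (Lov\'asz--Plummer) and is what the cited reference ultimately rests on as well.
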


\begin{lemma}[\cite{FKC2021}]\label{brace-3-connected}
Every brace on six or more vertices is 3-connected.
\end{lemma}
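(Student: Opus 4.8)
The plan is to show directly that a brace $G$ on $2n\ge 6$ vertices can have no vertex cut of size at most two. Since $G$ is matching covered and has more than two vertices, it is already $2$-connected (as recalled in the introduction), so it suffices to exclude $2$-cuts and to note that $|V(G)|\ge 4$. Fix the bipartition $(A,B)$; since $G$ has a perfect matching, $|A|=|B|=n\ge 3$. The argument rests on recasting the ``no nontrivial tight cut'' property as a Hall-type neighbourhood-expansion condition, and then running a short counting argument against a hypothetical $2$-cut.

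First I would establish the expansion property. For $T\subseteq B$ let $N(T)\subseteq A$ be its neighbourhood. Because $G$ is connected and matching covered, a routine cut count gives $|N(T)|\ge |T|+1$ for every $T$ with $T\ne\emptyset$ and $T\ne B$; indeed $|N(T)|=|T|$ would make $\partial(T\cup N(T))$ meet every perfect matching in zero edges, forcing $G$ to be disconnected. The crucial refinement is that if $1\le |T|\le n-2$ and $|N(T)|=|T|+1$, then $X:=T\cup N(T)$ is a \emph{nontrivial tight cut}: every vertex of $T$ must be matched into $N(T)$, so in each perfect matching exactly one vertex of $N(T)$ is matched outside $X$ while no vertex of $T$ is, whence $|M\cap\partial(X)|=1$ for all $M$. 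Since $G$ is a brace this is impossible, so
\[
|N(T)|\ge |T|+2 \qquad (1\le |T|\le n-2),
\]
and symmetrically for subsets of $A$. Taking $|T|=1$ (and ruling out degree one by $|N(T)|\ge |T|+1$) shows in particular that $G$ has minimum degree at least three.

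Next, suppose for contradiction that $\{u,v\}$ is a $2$-cut and let $H=G-u-v$ be disconnected. By minimum degree three, no component of $H$ lies inside a single colour class, since its vertices would then have all their neighbours among $\{u,v\}$ and hence degree at most two; thus every component meets both $A$ and $B$. Take a single component $K$, put $k_A=|K\cap A|$ and $k_B=|K\cap B|$, and set $p=|\{u,v\}\cap A|$, $q=|\{u,v\}\cap B|$, so that $p+q=2$. As no edge of $H$ leaves $K$, every neighbour of $K\cap B$ lies in $(K\cap A)\cup(\{u,v\}\cap A)$ and every neighbour of $K\cap A$ lies in $(K\cap B)\cup(\{u,v\}\cap B)$, giving $|N(K\cap B)|\le k_A+p$ and $|N(K\cap A)|\le k_B+q$. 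One may choose the component $K$ so that both $k_A$ and $k_B$ lie in $[1,n-2]$, where the expansion bound applies; this is always possible precisely because $p+q=2$ forces at most one side-total to reach $n$. The expansion inequalities then read $k_B+2\le k_A+p$ and $k_A+2\le k_B+q$, and adding them gives $k_A+k_B+4\le k_A+k_B+2$, a contradiction. Hence $G$ has no $2$-cut, and being $2$-connected on at least four vertices, $G$ is $3$-connected.

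I expect the main obstacle to be the structural step: verifying that $\partial(T\cup N(T))$ is tight and that the brace condition is exactly captured by the surplus-two expansion bound, which requires careful analysis of how perfect matchings cross cuts in a bipartite graph. A secondary but essential point is the bookkeeping in selecting the component $K$ so that both $k_A$ and $k_B$ lie in $[1,n-2]$; this is where the cases according to how $u$ and $v$ are distributed between $A$ and $B$ are absorbed uniformly through the single relation $p+q=2$.
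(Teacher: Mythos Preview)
The paper does not prove this lemma at all: it is quoted from \cite{FKC2021} and used as a black box. So there is no ``paper's own proof'' to compare against. Your argument is therefore a genuine addition rather than a reproduction.

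Your route is the standard one and it is correct. The key translation---that a bipartite matching covered graph is a brace if and only if $|N(T)|\ge |T|+2$ for every $T$ with $1\le |T|\le n-2$---is exactly right: when $|N(T)|=|T|+1$ the set $X=T\cup N(T)$ has $|M\cap\partial(X)|=1$ for every perfect matching $M$ (all of $T$ is matched into $N(T)$, leaving one vertex of $N(T)$ matched outside), and $3\le |X|\le 2n-3$ makes this cut nontrivial. The $2$-cut contradiction via $k_B+2\le k_A+p$, $k_A+2\le k_B+q$, $p+q=2$ is clean.

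The one place you rightly flag---choosing a component $K$ with both $k_A,k_B\in[1,n-2]$---does go through, but your one-line justification is too compressed to be convincing as written. Spell it out by the three cases on $(p,q)$: if $p=q=1$ then every component already satisfies $k_A\le n-2$ and $k_B\le n-2$ since the side-sums are $n-1$; if $p=0$, $q=2$ then every component has $k_B\le n-3$, and since the $k_A$'s sum to $n$ over at least two components each contributing at least $1$, some component has $k_A\le n-2$; the case $p=2$, $q=0$ is symmetric. With that paragraph inserted the proof is complete.
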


\begin{lemma}[\cite{ZWY2022}]\label{removable-tight-cut}
Let $C$ be a tight cut of a matching covered graph $G$ and $e$ an edge of $G$. Then $e$ is
removable in $G$ if and only if $e$ is removable in each $C$-contraction of G which contains it.
\end{lemma}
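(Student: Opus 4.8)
The plan is to induct on the number of vertices of $G$ (equivalently, on the number of braces appearing in a tight cut decomposition of $G$), with Theorem \ref{brick} as the base case. If $G$ is itself a brick then, since $G\neq K_4,\overline{C_6}$, the bound $\Delta-2$ is immediate from Theorem \ref{brick}. So assume $G$ is a near-brick that is not a brick; then $G$ has a nontrivial tight cut, and since $b(G)=1$ each nontrivial tight cut $\partial(X)$ has one contraction that is again a near-brick and the other bipartite and matching covered. Fix a vertex $v$ with $\deg_G(v)=\Delta$. Because contracting one shore of a tight cut preserves the degrees of all vertices lying in the other shore, $v$ survives with its degree intact in exactly one terminal graph $H$ of the decomposition, and $\deg_H(v)=\Delta\ge 3$ (a $2$-connected graph with $\Delta=2$ is a cycle, hence not a near-brick).

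The core of the argument is to peel off a single terminal brace. Concretely, I would choose a tight cut $\partial(X)$ for which $R:=G/\overline{X}$ is a brace (a leaf of the decomposition) with $v\in\overline{X}$, so that the other contraction $G':=G/X$ is a strictly smaller near-brick containing $v$ with $\deg_{G'}(v)=\Delta$, whence $\Delta(G')\ge\Delta$. Two things must be checked before applying the induction hypothesis to $G'$. First, $G'$ must remain irreducible: the only vertex of $G'$ not already present in $G$ is the contraction vertex $x$, and any single ear of length $\ge 3$ through $x$ would, after expanding $x$ back to $X$, yield a single ear of length $\ge 3$ in $G$; here Lemma \ref{brace-3-connected} is exactly what guarantees $|\partial(X)|\ge 3$ when $R$ has at least six vertices, so $x$ has degree at least three and cannot be an internal (degree-two) vertex of such an ear. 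Second, I must transfer removable edges from $G'$ back to $G$.

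For the transfer I would use Lemma \ref{removable-tight-cut}. An edge of $G'$ lying inside $\overline{X}$ occurs in only the contraction $G'$, hence is removable in $G$ if and only if it is removable in $G'$. An edge of $G'$ incident with $x$ is an edge of $\partial(X)$ and occurs in both contractions $G'$ and $R$; here Lemma \ref{brace} is decisive, since when $R$ is a brace on at least six vertices every such edge is automatically removable in $R$, so again it is removable in $G$ precisely when it is removable in $G'$. Thus, whenever the peeled brace $R$ has at least six vertices, \emph{every} removable edge of $G'$ is removable in $G$, and the $\ge\Delta(G')-2\ge\Delta-2$ removable edges supplied by the induction hypothesis survive in $G$. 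The degenerate possibilities $G'=K_4$ or $G'=\overline{C_6}$ force $\Delta\le 3$, and are absorbed because every edge of $R$ internal to $X$ is, again by Lemma \ref{brace}, removable in $G$, which already supplies more than enough removable edges.

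The step I expect to be the main obstacle is the case in which no terminal brace on six or more vertices is available to peel, the prototypical offender being a $C_4$-brace: such a brace attaches through a tight cut of size two, to which neither Lemma \ref{brace} nor Lemma \ref{brace-3-connected} applies, and — since the thin path it creates has even length — irreducibility does not forbid it. Here the two cut edges are the only ones whose removability does not transfer for free, and I would argue directly, using the $2$-cut structure and the fact that parallel edges are always removable, that $G$ still retains at least $\Delta-2$ removable edges. The closely related situation in which $v$ itself lies in the peeled brace (so $v\in X$) would be handled by the same bookkeeping: all but the few edges joining $v$ to the contraction vertex transfer cleanly, and those few are controlled because multiple edges are removable. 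Completing these small-brace and boundary cases, together with verifying that a leaf brace avoiding $v$ can always be selected, is where the real work lies.
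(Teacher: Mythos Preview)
Your proposal does not address the stated lemma. The statement to be proved is that, for a fixed tight cut $C$ of a matching covered graph $G$, an edge $e$ is removable in $G$ if and only if it is removable in every $C$-contraction of $G$ containing it. Your write-up is instead a sketch of a proof of Theorem~\ref{main:result} (the $\Delta-2$ lower bound for irreducible near-bricks), and indeed you \emph{invoke} Lemma~\ref{removable-tight-cut} as a tool (``For the transfer I would use Lemma~\ref{removable-tight-cut}''). Nothing in your text---the induction on $|V(G)|$, the peeling of a terminal brace, the irreducibility check for the contracted vertex, the $C_4$-brace discussion---has any bearing on the biconditional you were asked to establish.

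For the record, the paper does not supply its own proof of this lemma; it is quoted from~\cite{ZWY2022}. An actual argument is short and structural: perfect matchings of $G$ are in bijection with pairs $(M_1,M_2)$ of perfect matchings of the two $C$-contractions that agree on $C$ (this is the defining property of a tight cut). If $e\notin C$, then $e$ lies in exactly one contraction, and $G-e$ is matching covered if and only if that contraction minus $e$ is, the other contraction being unchanged. If $e\in C$, then $e$ lies in both contractions, $C\setminus\{e\}=\partial_{G-e}(X)$ is still a tight cut of $G-e$, and the same correspondence applied to $G-e$ gives the equivalence. No induction, no braces, no irreducibility.
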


The following equality reveals an important property of the numbers of bricks of matching covered graphs with respect to tight cuts.

\begin{lemma}[\cite{CLM2002}]\label{tight-cut-b}
Let $G$ be a matching covered graph and $C$ a tight cut of $G$. Let $G_1$
and $G_2$ be the two $C$-contractions of $G$. Then $b(G)=b(G_1)+b(G_2)$.
\end{lemma}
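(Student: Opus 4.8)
The plan is to induct on $|V(G)|$. If $G$ is itself a brick then, being different from $K_4$ and $\overline{C_6}$, it already satisfies the conclusion by Theorem \ref{brick}, which serves as the base case. So assume the irreducible near-brick $G$ is not a brick; since a brick is precisely a matching covered graph with no nontrivial tight cut, $G$ then has a nontrivial tight cut. Performing a tight cut decomposition on its bipartite side and pulling the resulting cuts back to $G$, I may fix a nontrivial tight cut $C=\partial(X)$ for which $G_1:=G/X$ is a brace. Lemma \ref{tight-cut-b} gives $b(G_1)+b(G_2)=b(G)=1$, and since the brace $G_1$ is bipartite we get $b(G_1)=0$ and $b(G_2)=1$; thus $G_2:=G/\overline{X}$ is a near-brick on strictly fewer vertices than $G$, so induction is available to it.

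Write $R(H)$ for the number of removable edges of $H$. I first check that $G_2$ is again irreducible. A single ear of $G_2$ whose interior avoids the contraction vertex $\overline{x}$ has all interior vertices in $X$, where degrees coincide with those in $G$, and hence lifts to a single ear of the same odd length $\ge 3$ in $G$, contradicting irreducibility. Since $\deg_{G_2}(\overline{x})=|C|$, when $G_1$ has at least six vertices Lemma \ref{brace-3-connected} makes it $3$-connected, forcing $|C|\ge 3$, so $\overline{x}$ is never an interior (degree-two) vertex of a single ear and the previous case applies. In the only other possibility, $G_1=C_4$, the set $\overline{X}$ induces a path on three vertices, so replacing $\overline{x}$ by this path lengthens any ear through $\overline{x}$ by an even amount; a single ear of $G_2$ would thus again yield one of odd length $\ge 3$ in $G$. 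Hence $G_2$ is an irreducible near-brick.

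Now suppose, as the main case, that $G_1$ is a brace on at least six vertices and $G_2\notin\{K_4,\overline{C_6}\}$. By Lemma \ref{removable-tight-cut}, an edge inside $\overline{X}$ is removable in $G$ iff it is removable in $G_1$, an edge inside $X$ iff it is removable in $G_2$, and a $C$-edge iff it is removable in both. Lemma \ref{brace} makes every edge of $G_1$ removable, so every $C$-edge removable in $G_2$ survives, giving $R(G)=|E(G[\overline{X}])|+R(G_2)$; moreover induction gives $R(G_2)\ge\Delta(G_2)-2$. Let $v$ have $\deg_G(v)=\Delta$. If $v\in X$ then $\deg_{G_2}(v)=\Delta$, so $R(G)\ge R(G_2)\ge\Delta(G_2)-2\ge\Delta-2$. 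If $v\in\overline{X}$ then its $\deg_{G[\overline{X}]}(v)$ edges inside $\overline{X}$ are all removable, while $R(G_2)\ge\Delta(G_2)-2\ge|C|-2\ge|\partial(v)\cap C|-2$; adding these disjoint contributions yields $R(G)\ge\deg_{G[\overline{X}]}(v)+|\partial(v)\cap C|-2=\Delta-2$.

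The two degenerate configurations, which I expect to be the real work, are separated by $|C|$. If $G_1=C_4$ then $|C|=2$, the three vertices of $\overline{X}$ have degree two so the maximum-degree vertex lies in $X$, and the degree-two vertex $\overline{x}$ of $G_2$ carries no removable edge (deleting either of its edges forces the other into every perfect matching and strands a neighbouring edge); hence no removable edge is lost, $G_2\notin\{K_4,\overline{C_6}\}$, and induction gives $R(G)\ge R(G_2)\ge\Delta(G_2)-2\ge\Delta-2$. If instead $G_2\in\{K_4,\overline{C_6}\}$ then $|C|=3$ and $G_1$ is a brace on at least six vertices, but the inductive bound on $R(G_2)$ fails since $G_2$ has no removable edge; here I would bound $R(G)=|E(G[\overline{X}])|=|E(G_1)|-3$ below by $\Delta-2$, using that the $3$-connected brace $G_1$ satisfies $|E(G_1)|\ge\Delta(G_1)+1\ge\Delta+1$. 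Keeping the estimate $\Delta-2$ tight across these exceptional cases is the step I anticipate being most delicate.
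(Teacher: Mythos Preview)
Your proposal does not address the stated lemma at all. Lemma~\ref{tight-cut-b} asserts the additivity $b(G)=b(G_1)+b(G_2)$ of the brick number under a tight cut; it says nothing about removable edges, near-bricks, or $\Delta-2$. What you have written is an attempted proof of Theorem~\ref{main:result} (the main theorem of the paper), not of Lemma~\ref{tight-cut-b}. Indeed, in your very first paragraph you \emph{invoke} Lemma~\ref{tight-cut-b} (``Lemma~\ref{tight-cut-b} gives $b(G_1)+b(G_2)=b(G)=1$'') as a tool, which makes it clear that you are proving a downstream statement rather than the lemma itself.

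For the record, the paper does not prove Lemma~\ref{tight-cut-b} either: it is quoted from \cite{CLM2002} and used as a black box. A proof of the lemma would proceed by showing that nontrivial tight cuts of $G_1$ and $G_2$ lift to nontrivial tight cuts of $G$ disjoint from $C$ (and conversely), so that a tight cut decomposition of $G$ that begins with $C$ yields exactly the bricks coming from decompositions of $G_1$ and $G_2$; Lov\'asz's uniqueness theorem then gives the equality. None of this appears in your write-up. If your intention was actually to prove Theorem~\ref{main:result}, you should say so and compare against the paper's Section~\ref{sec:Proof}; but as a proof of Lemma~\ref{tight-cut-b}, the proposal is simply off target.
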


Using the above lemma, we can easily obtain the following result, also see \cite{CLM2002}.

\begin{lemma}[\cite{CLM2002}]\label{tight-cut-near-brick}
For any tight cut $C$ of a near-brick $G$, precisely one of the shores of $C$ is bipartite.
\end{lemma}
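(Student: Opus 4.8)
The plan is to convert the statement about bipartite shores into a statement about the bipartiteness of the two $\partial(X)$-contractions, for which the brick-count identity of Lemma~\ref{tight-cut-b} is tailor-made. Write $C=\partial(X)$ and let $G_1=G/X$ and $G_2=G/\overline{X}$ be the two $C$-contractions, with contracted vertices $x$ and $\overline{x}$ respectively. Since $G$ is a near-brick, $b(G)=1$, so Lemma~\ref{tight-cut-b} gives $b(G_1)+b(G_2)=1$; as both summands are nonnegative integers, exactly one of them equals $0$ and the other equals $1$. Recalling from the introduction that $b(H)=0$ if and only if $H$ is bipartite, this says that \emph{precisely one of $G_1,G_2$ is bipartite}. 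Thus the lemma will follow once I translate ``$G_i$ bipartite'' into ``a shore is bipartite''.

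The heart of the matter is therefore the following equivalence, which I would isolate as the key step: for a tight cut $\partial(X)$, the contraction $G/X$ is bipartite if and only if the shore $\overline{X}$ is bipartite (that is, $G[\overline{X}]$ is bipartite), and symmetrically for $G/\overline{X}$ and $X$. One direction is immediate, since $G[\overline{X}]$ is an induced subgraph of $G/X$ and bipartiteness passes to subgraphs. The substantive direction, and the main obstacle, is to show that if $G[\overline{X}]$ is bipartite then contracting $X$ to the single vertex $x$ cannot create an odd cycle. Every odd cycle of $G/X$ must then run through $x$, so such a cycle would use two cut edges meeting $\overline{X}$ in vertices $u_1,u_2$ joined by an odd path inside $G[\overline{X}]$; hence $u_1,u_2$ would lie in one component $H$ of $G[\overline{X}]$ but in different parts $P,Q$ of its bipartition. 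Here is where tightness enters: because $G$ is matching covered, I can choose a perfect matching through each of the two offending cut edges, and the tightness of $\partial(X)$ forces each such matching to meet $C$ exactly once. A short parity count then shows that routing the single cut edge through $u_1\in P$ forces $|P|=|Q|+1$, while routing it through $u_2\in Q$ forces $|Q|=|P|+1$, an outright contradiction. I expect verifying this parity bookkeeping, and in particular being careful that $G[\overline{X}]$ may be disconnected so that the argument has to be localized to the component $H$, to be the most delicate part.

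With the key equivalence in hand the proof closes immediately: ``$G/X$ bipartite'' is equivalent to ``$\overline{X}$ is a bipartite shore'' and ``$G/\overline{X}$ bipartite'' to ``$X$ is a bipartite shore'', so the fact that exactly one of $G_1,G_2$ is bipartite is precisely the assertion that exactly one of the two shores of $C$ is bipartite. I would state the equivalence as a standalone claim so that the final assembly reduces to a single line.
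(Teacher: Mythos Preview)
Your proposal is correct and follows precisely the route the paper indicates: the paper does not spell out a proof but says the lemma ``can easily [be] obtain[ed]'' from Lemma~\ref{tight-cut-b}, which is exactly your first step $b(G_1)+b(G_2)=1$. The equivalence you prove between ``$G/X$ bipartite'' and ``the shore $\overline{X}$ is bipartite'' is the detail the paper leaves implicit (deferring to \cite{CLM2002}); your parity argument via tightness is the standard way to see it, and in fact for a tight cut of a matching covered graph $G[\overline{X}]$ is necessarily connected, so your caution about localizing to a component is harmless but unnecessary.
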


%A path is odd or even according to the parity of its length.
{\it To bisubdivide an edge $e$} of a graph $G$ is to replace $e$ by an odd path with length at least three. %$P_e$
The resulting graph is called a {\it  bisubdivision} of $G$ at the edge $e$.
Let $RE(G)$ denote the set of all the removable edges of $G$.

\begin{lemma}\label{bisubdivision}
Let $G$ be a graph and let $H$ be a bisubdivision of $G$ at an edge $e$. Suppose that $H$ is a matching covered graph. Then $G$ is a matching covered graph with $b(G)=b(H)$ and $RE(H)= RE(G)\backslash\{e\}$.
\end{lemma}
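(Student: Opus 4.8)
The plan is to produce one nontrivial tight cut of $H$ whose two contractions are an isomorphic copy of $G$ on one side and a bipartite graph on the other; all three assertions then fall out of Lemmas \ref{removable-tight-cut} and \ref{tight-cut-b}. Write $e=xy$, and let $H$ arise from $G$ by replacing $e$ with a path $P=xu_1u_2\cdots u_{2t}y$ of odd length $2t+1\ge 3$, so that $u_1,\dots,u_{2t}$ are new vertices of degree two; one may assume $|V(G)|\ge 4$. I would set $X:=\{x,u_1,\dots,u_{2t}\}$ and $C:=\partial_H(X)$, and observe that $C=(\partial_G(x)\setminus\{e\})\cup\{u_{2t}y\}$.

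First I would check that $C$ is a tight cut of $H$. In any perfect matching $M$ of $H$, the degree-two vertices force $M\cap E(P)$ to be one of exactly two matchings of $P$: either $\{xu_1,u_2u_3,\dots,u_{2t}y\}$, which also saturates $x$ and $y$, or $\{u_1u_2,u_3u_4,\dots,u_{2t-1}u_{2t}\}$, which saturates neither. In the first case $M\cap C=\{u_{2t}y\}$; in the second, $u_{2t}y\notin M$ and $x$ is matched by an edge of $\partial_G(x)\setminus\{e\}\subseteq C$, which is then the only edge of $C$ in $M$. Hence $|M\cap C|=1$ always, so $C$ is tight, and it is nontrivial because $|X|=2t+1\ge 3$ and $|\overline X|=|V(G)|-1\ge 3$.

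Next I would identify the two $C$-contractions. Shrinking $X$ reattaches the edges of $\partial_G(x)\setminus\{e\}$ and the edge $u_{2t}y$ at the contracted vertex and discards the loops inside $X$, yielding a graph isomorphic to $G$ with $u_{2t}y$ in the role of $e$; in particular $H/X$ is a matching covered $C$-contraction of $H$, so $G$ is matching covered. Shrinking $\overline X$ turns $P$ together with one edge of $\partial_G(x)\setminus\{e\}$ into an even cycle $C_{2t+2}$, plus $\deg_G(x)-2$ further parallel edges on its two antipodal vertices; this graph is bipartite, so $b(H)=b(H/X)+b(H/\overline X)=b(G)+0$ by Lemma \ref{tight-cut-b}. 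For the removable edges I would apply Lemma \ref{removable-tight-cut}. The interior path edges $xu_1,\dots,u_{2t-1}u_{2t}$ do not survive in $H/X$ and are non-removable in $H/\overline X$ (deleting any leaves a degree-one vertex), so none lies in $RE(H)$; likewise $u_{2t}y$ is non-removable in $H/\overline X$, hence $u_{2t}y\notin RE(H)$. An edge $f\in E(G)\setminus\{e\}$ not meeting $x$ survives only in $H/X\cong G$, so $f\in RE(H)\iff f\in RE(G)$. An edge $f\ne e$ meeting $x$ lies in both contractions; if $\deg_G(x)\ge 3$ it is one of at least two parallel edges in $H/\overline X$ and thus removable there, so again $f\in RE(H)\iff f\in RE(G)$; if $\deg_G(x)=2$, then $H/\overline X=C_{2t+2}$ forces $f\notin RE(H)$, while $G-f$ has a degree-one vertex, forcing $f\notin RE(G)$. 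Collecting cases, $RE(H)=\{f\in E(G)\setminus\{e\}:f\in RE(G)\}=RE(G)\setminus\{e\}$.

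The step I expect to be the main obstacle is pinning down exactly which matchings of $P$ a perfect matching of $H$ can induce and confirming $|M\cap C|=1$ in each case; after that, everything is a direct appeal to Lemmas \ref{removable-tight-cut} and \ref{tight-cut-b}, save for the somewhat fiddly removable-edge bookkeeping (in particular the low-degree endpoint subcase and the exclusion of the trivially small graphs).
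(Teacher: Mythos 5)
Your proof is correct and follows essentially the same route as the paper's: the same cut $\partial_H(X)$ with $X=V(P)\setminus\{y\}$, shown to be tight, with $H/X\cong G$ and $H/\overline{X}$ bipartite, followed by Lemmas \ref{tight-cut-b} and \ref{removable-tight-cut} together with the same $d_G(x)\ge 3$ observation for the edges of the cut. Two minor slips worth fixing: the $\deg_G(x)-2$ extra parallel edges of $H/\overline{X}$ join two \emph{adjacent} (not antipodal) vertices of the even cycle --- which is exactly why bipartiteness survives, whereas parallel edges on genuinely antipodal vertices could destroy it --- and the assumption $|V(G)|\ge 4$ is asserted rather than justified (it is automatic whenever $G$ is non-bipartite, as in the paper's application; the paper's own proof only excludes $G\cong K_2$ and is likewise silent on two-vertex multigraphs).
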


\begin{proof}
Since $H$ is a bisubdivision  of $G$ at the edge $e$, $H$ is obtained from $G$ by replacing $e$ by an odd path $P$ with length at least three.
We assert that $G$ is not isomorphic to $K_2$. Otherwise, $H$ is an odd path, contradicting the assumption that $H$ is a matching covered graph.
Let $e=uv$ and $X=V(P)\backslash\{v\}$. Then $G$ is isomorphic to $H/X$. Since $P-v$ is an even path of $H$ with all internal vertices of degree 2 in $H$, for each perfect matching $M$ of $H$, we have $|M\cap \partial_H(X)|=1$. Then $\partial_H(X)$ is a tight cut of $H$.
Since $H$ is a matching covered graph, so does $G$.
%Note that $H$ is different from $K_2$. Then $H$ is 2-connected.
Since  $G$ is not isomorphic to $K_2$,  $G$ is 2-connected. So $u$ has at least two neighbours in $G$. This implies that the underlying simple graph of $H/\overline{X}$ is an even cycle. So $b(H/\overline{X})=0$.
By Lemma \ref{tight-cut-b}, we have $b(H)=b(G)+b(H/\overline{X})=b(G)$.

Now we proceed to show that $RE(H)=RE(G)\backslash\{e\}$.
Note that each edge of $P$ is incident with a vertex of degree 2 in $H$, and hence is nonremovable in $H$.
Thus, if $f\in RE(H)$, then $f\in E(G)\backslash\{e\}$. By Lemma \ref{removable-tight-cut}, we have $f\in RE(G)\backslash\{e\}$. So $RE(H)\subseteq RE(G)\backslash\{e\}$. Now assume that $f\in RE(G)\backslash\{e\}$. If $f\notin \partial_H(X)$, Lemma \ref{removable-tight-cut} implies that $f\in RE(H)$. If $f\in \partial_H(X)$, then $f$ is incident with $u$ in $H$. Since $f$ is removable in $G$, we have  $d_G(u)\geq3$. So $d_H(u)=d_G(u)\geq3$. It follows that $f$ is a multiple edge of $H/\overline{X}$, and then is a removable edge of $H/\overline{X}$. Again by Lemma \ref{removable-tight-cut}, we have $f\in RE(H)$. It follows that $RE(G)\backslash\{e\}\subseteq RE(H)$. Consequently, $RE(H)=RE(G)\backslash\{e\}$.
\end{proof}

\section{Proof of Theorem \ref{main:result}}
\label{sec:Proof}
Suppose that $G$ is an irreducible near-brick different from $K_4$ and $\overline{C_6}$, and $\Delta=\Delta(G)$.
Then $b(G)=1$ and $|V(G)|\geq4$. Moreover, $G$ is  2-connected and matching covered.
So $\delta(G)\geq2$. If $\Delta<3$, then each vertex of $G$ has degree two. Thus $G$ is an even cycle. This implies that $b(G)=0$, a contradiction. Therefore, $\Delta\geq3$. We shall show that $G$ has at least $\Delta-2$ removable edges by induction on $|V(G)|+|E(G)|$.
Now we consider the following two cases according to whether $G$ has parallel edges or not.

{\bf Case 1.} $G$ has parallel edges.

Suppose that $G$ has two parallel edges, say $e_1,e_2$, which have common ends. Then $G-e_1$ is a near-brick, but it has strictly fewer edges than $G$.
Moreover, we have $\Delta(G-e_1)\geq\Delta-1$.
Recall that each multiple edge of a matching covered graph is a removable edge.
Then both $e_1$ and $e_2$ are removable in $G$, that is, $\{e_1,e_2\}\subseteq RE(G)$.

\vspace{2mm}
{\it Claim 1.} $RE(G-e_1)\subseteq RE(G)$.

Suppose that $f\in RE(G-e_1)$. If $f=e_2$, then $f\in RE(G)$.
If $f\neq e_2$, then $G-e_1-f$ is matching covered, and both $e_1$ and $e_2$ are multiple edges of $G-f$.
Therefore, $G-f$ is matching covered and then $f\in RE(G)$.
Claim 1 holds.
\vspace{2mm}

If $G-e_1$ is one of $K_4$ and $\overline{C_6}$, then $\Delta=4$ and $G$ has exactly two removable edges $e_1$ and $e_2$. The result holds. We may thus assume that $G-e_1$ is neither $K_4$ nor $\overline{C_6}$.
If $G-e_1$ is irreducible, by the induction hypothesis, $G-e_1$ has at least $\Delta(G-e_1)-2$ removable edges, that is, $|RE(G-e_1)|\geq \Delta(G-e_1)-2$.
Recall that $e_1\in RE(G)$ and $\Delta(G-e_1)\geq\Delta-1$. By Claim 1,
$$|RE(G)|\geq|RE(G-e_1)|+1\geq \Delta(G-e_1)-2+1\geq\Delta-2.$$
Therefore, $G$ has at least $\Delta-2$ removable edges.

If $G-e_1$ is not irreducible, since $G$ is irreducible, $G-e_1$ has a single ear with length at least 3, which  contains $e_2$. Let $P_{e_2}$ be such a maximal single ear, and $s$ and $t$ the two ends of $P_{e_2}$.
%Then $|P_{e_2}|=3$ or 5 because $G$ is irreducible and $G-e_1$ is not irreducible.
Since $P_{e_2}$ contains $e_2$ in $G-e_1$, $e_2$ is incident with a vertex of degree 2 in $G-e_1$. This implies that $e_2\notin RE(G-e_1)$.
Let $G'$ be the graph obtained from $G-e_1-(V(P_{e_2})\backslash\{s,t\})$ by adding a new edge $e$ that connects $s$ and $t$.
For each vertex $x^{*}$ of $G'$, we can see that $d_{G'}(x^{*})=d_{G-e_1}(x^{*})$.
In particular, if $x^{*}\notin\{s,t\}$, then  $d_{G'}(x^{*})=d_{G-e_1}(x^{*})=d_G(x^{*})$.
Moreover, we have $\Delta(G')=\Delta(G-e_1)$.

\vspace{2mm}
{\it Claim 2.} $G'$ is a near-brick and $RE(G-e_1)=RE(G')\backslash\{e\}$.

Note that $G-e_1$ is a bisubdivision of $G'$ at the edge $e$. Since $G-e_1$ is a near-brick, by Lemma \ref{bisubdivision}, $G'$ is a near-brick and $RE(G-e_1)=RE(G')\backslash\{e\}$. Claim 2 holds.
\vspace{2mm}

{\it Claim 3.} $G'$ is irreducible.

Assume that both $s$ and $t$ have degree two in $G'$. Then both $s$ and $t$ have degree two in $G-e_1$.
Recall that $G-e_1$ is a near-brick. Then $G-e_1$ is 2-connected and is not an even cycle. So we have $st\notin E(G-e_1)$.
Let $s_1$ be the only vertex in $N_{G-e_1}(s)\backslash V(P_{e_2})$ and $t_1$ the only vertex in $N_{G-e_1}(t)\backslash V(P_{e_2})$.
If $s_1=t_1$, then $G-e_1-s_1$ has an even component $P_{e_2}$.
This implies that $s_1$ is a cut vertex of $G-e_1$, contradicting the fact that $G-e_1$ is 2-connected. So $s_1\neq t_1$.
Then $P_{e_2}+ss_1+tt_1$ is a single ear of $G-e_1$ that contains $e_2$ and longer than $P_{e_2}$, contradicting the maximal of $P_{e_2}$.
Therefore, at least one of $s$ and $t$ have degree three or more in $G'$.
Assume, without loss of generality, that $s$ has degree three or more in $G'$.
If $t$ has degree three or more in $G'$, since $G$ is irreducible, so does $G'$.
The claim holds.
Now assume that $t$ has degree two in $G'$.
If $N_{G'}(t)=\{s\}$, then $st\in E(G-e_1)$. This implies that $s$ is a cut vertex of $G-e_1$, a contradiction. So $t$ has exactly two distinct neighbours in $G'$.
Let $t'$ be the only vertex in $N_{G'}(t)\backslash\{s\}$.
Then $t'$ is the only vertex in $N_{G-e_1}(t)\backslash V(P_{e_2})$.
If $t'$ has degree two in $G-e_1$, since $G-e_1$ is 2-connected, $t'$ has exactly one neighbour other than $t$, say $t''$, and  $t''\neq s$. This implies that $P_{e_2}+tt't''$ is a single ear of $G-e_1$ that contains $e_2$ and longer than $P_{e_2}$, contradicting the maximal of $P_{e_2}$.
So $t'$ has degree three or more in $G-e_1$, and then it has degree three or more in $G'$.
Since $s$ has degree three or more in $G'$ and $G$ is irreducible, $G'$ is irreducible.
Claim 3 holds.
\vspace{2mm}

If $G'$ is one of $K_4$ and $\overline{C_6}$, then $\Delta(G-e_1)=\Delta(G')=3$. So we have $\Delta\leq\Delta(G-e_1)+1=4$. Recall that $\{e_1,e_2\}\subseteq RE(G)$.
The result holds.
Now suppose that $G'$ is different from $K_4$ and $\overline{C_6}$.
By the induction hypothesis, $G'$ has at least $\Delta(G')-2$ removable edges, that is, $|RE(G')|\geq \Delta(G')-2$.
By Claim 2, we have $RE(G-e_1)=RE(G')\backslash\{e\}$.
Then $|RE(G-e_1)|\geq|RE(G')|-1\geq\Delta(G')-3$. %we have $|RE(G-e_1)|\geq\Delta(G')-3$.
By Claim 1, we have $RE(G-e_1)\subseteq RE(G)$.
Recall that $e_2\notin RE(G-e_1)$ and $\{e_1,e_2\}\subseteq RE(G)$.
Then $$|RE(G)|\geq|RE(G-e_1)|+2\geq\Delta(G')-3+2=\Delta(G-e_1)-1\geq\Delta-2.$$
That is, $G$ has at least $\Delta-2$ removable edges.

{\bf Case 2.} $G$ is simple.

Note that $C_4$ and $K_4$ are the only two simple matching covered graphs with four vertices. Since $G$ is a near-brick different from $K_4$ and $|V(G)|\geq4$, we have $|V(G)|\geq6$.

If $G$ is a brick, by Theorem \ref{brick}, the result holds. So we may assume that $G$ is not a brick.
Since $b(G)=1$, $G$ is not a brace and  $G$ has a nontrivial tight cut.
Let $C:=\partial(X)$ be a nontrivial tight cut of $G$.
By Lemma \ref{tight-cut-near-brick}, we may assume that $G[X]$ is bipartite, subject to this,  $X$ is minimal.
Let $G_1=G/(X\rightarrow x)$ and $G_2=G/(\overline{X}\rightarrow \overline{x})$.
Then $G_1$ is matching covered and $G_2$ is a brace.
By Lemma \ref{tight-cut-b}, $G_1$ is a near-brick.
Assume that $(B,I)$ is the  bipartition of $G_2$ such that $\overline{x}\in I$.
Then $X=B\cup (I\backslash\{\overline{x}\})$.
Let $u$ be a vertex of $G$ such that $d_{G}(u)=\Delta$, and write $\Delta_1=\Delta(G_1)$.

First suppose that $|V(G_2)|\geq 6$. Then $|I|\geq3$. By Lemma \ref{brace}, each edge of $G_2$ is removable in $G_2$. By Lemma \ref{removable-tight-cut}, each removable edge of $G_1$ is also a removable edge of $G$, that is, $RE(G_1)\subseteq RE(G)$.
Since $G_2$ is a brace and $|V(G_2)|\geq 6$, by Lemma \ref{brace-3-connected}, we have $\delta(G_2)\geq3$.
Then each vertex in $X$ has degree three or more in $G$, and $d_{G_1}(x)=d_{G_2}(\overline{x})\geq3$.
Since $G$ is irreducible, so does $G_1$. If $G_1$ is one of $K_4$ and $\overline{C_6}$, then $|C|=3$ and each vertex of $\overline{X}$ has degree three in $G$.
We may thus assume that $u\in X$.
We assert that $|\partial_G(u)\cap C|\leq 2$. Otherwise, $u$ is a cut vertex of $G$, contradicting the fact that $G$ is 2-connected.
Since every edge of $G_2$ is removable in $G_2$, by Lemma \ref{removable-tight-cut}, each edge of $G[X]$ is removable in $G$. This implies that $G$ has at least $|\partial_G(u)\backslash C|\geq \Delta-2$ removable edges.
Now assume that $G_1$ is different from $K_4$ and $\overline{C_6}$.  By the induction hypothesis, $G_1$ has at least $\Delta_1-2$ removable edges, that is, $|RE(G_1)|\geq\Delta_1-2$.

If $u\in \overline{X}$, then $\Delta_1\geq\Delta$.
Since $RE(G_1)\subseteq RE(G)$, we have
$$|RE(G)|\geq|RE(G_1)|\geq\Delta_1-2\geq\Delta-2.$$
So $G$ has at least $\Delta-2$ removable edges.
Now assume that $u\in X$. Recall that each edge of $G[X]$ is removable in $G$.
If $u\in I\backslash\{\overline{x}\}$, then each edge incident with $u$ is removable in $G$. This implies that $G$ has at least $\Delta$ removable edges.
If $u\in B$, since $G$ is simple and $u$ has at most $|I|-1$ neighbours in $I\backslash\{\overline{x}\}$, we have $|C|\geq d_G(u)-(|I|-1)=\Delta-|I|+1$.
Then $\Delta_1\geq d_{G_1}(x)=|C|\geq\Delta-|I|+1$.
Note that each edge which is incident with a vertex in $I\backslash\{\overline{x}\}$ is removable in $G$.
Since $RE(G_1)\subseteq RE(G)$ and $\delta(G_2)\geq3$, we have
$$|RE(G)|\geq|RE(G_1)|+3|I\backslash\{\overline{x}\}|\geq\Delta_1-2+3(|I|-1)\geq\Delta-|I|+1-5+3|I|=\Delta-4+2|I|\geq\Delta+2.$$
Therefore, $G$ has at least $\Delta+2$ removable edges.

Now suppose that $|V(G_2)|=4$.
Then $|B|=2$ and $|I\backslash\{\overline{x}\}|=1$. Moreover, the only vertex in $I\backslash\{\overline{x}\}$ has degree two in $G$ because $G$ is simple. Since $G$ is irreducible, each vertex in $B$ has degree three or more in $G$.
Thus, we have $d_{G_1}(x)=d_{G_2}(\overline{x})=|C|\geq4$, and each edge of $C$ is a multiple edge of $G_2$. Then each edge of $C$ is a removable edge of $G_2$.
By Lemma \ref{removable-tight-cut}, we have $RE(G_1)\subseteq RE(G)$.
Since $d_{G_1}(x)\geq4$ and $G$ is irreducible, $G_1$ is irreducible and is different from $K_4$ and $\overline{C_6}$.
By the induction hypothesis, $G_1$ has at least $\Delta_1-2$ removable edges, that is, $|RE(G_1)|\geq \Delta_1-2$.
If $\Delta_1\geq\Delta$, since $RE(G_1)\subseteq RE(G)$, we have $|RE(G)|\geq|RE(G_1)|\geq \Delta_1-2\geq\Delta-2$.
Then $G$ has at least $\Delta-2$ removable edges.
To complete the proof, we now show that $\Delta_1\geq\Delta$. Clearly, it is true when $u\in \overline{X}$.
We may assume that $u\in X$. Then $u\in B$ and $\Delta_1\geq d_{G_1}(x)=d_{G_2}(\overline{x})\geq d_G(u)-1+2=\Delta+1$.
Theorem \ref{main:result} holds.   $\hfill\square$  \\

\textbf{Remark.} The condition of Theorem \ref{main:result} that the graph is irreducible is necessary. For instance, the graph in Figure \ref{fig2}($a$) is a near-brick with maximum degree four but not irreducible, and has exactly one removable edge $e$.
Furthermore, the lower bound of Theorem \ref{main:result} is sharp. The graph
shown in Figure \ref{fig2}($b$) is an irreducible near-brick with maximum degree four and has exactly two removable edges $e$ and $f$; the graph $R_8$ shown in Figure \ref{fig2}($c$) is a cubic brick with exactly one removable edge $h$.

\begin{figure}[h]
 \centering
 \includegraphics[width=0.8\textwidth]{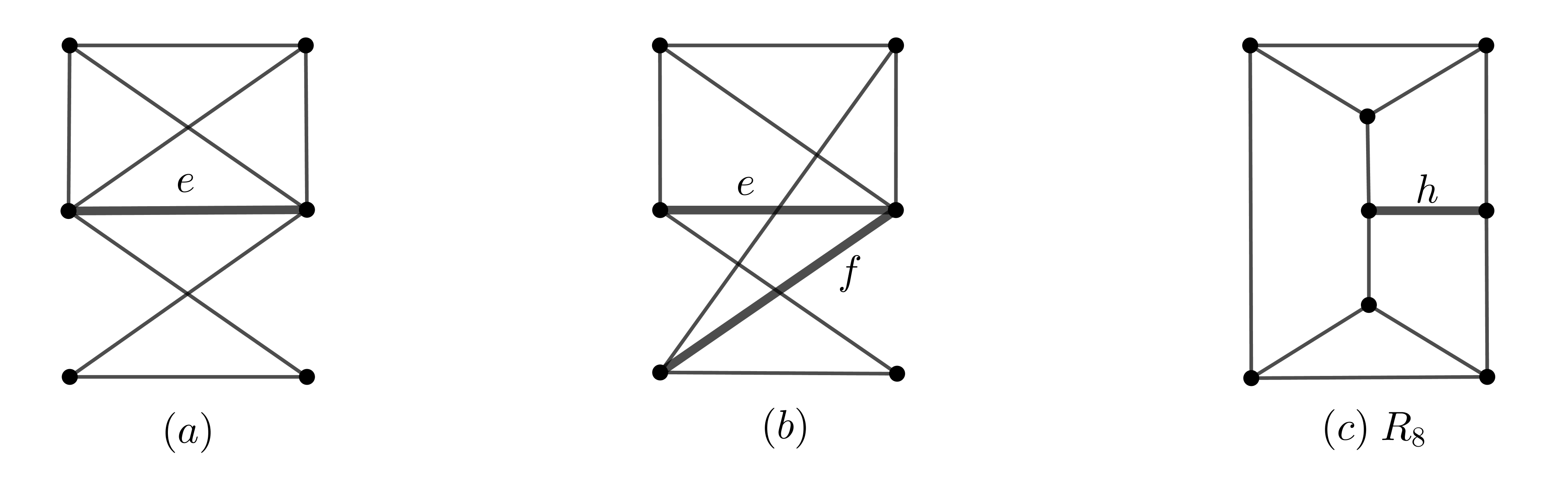}\\
 \caption{The three near-bricks.}\label{fig2}
\end{figure}

\acknowledgements
\label{sec:ack}
This work is supported by the National Natural Science Foundation of China under grant number 12171440.

\nocite{*}
\bibliographystyle{abbrvnat}
% use the following instead if you encounter problems
%\bibliographystyle{alpha}
\bibliography{sample-dmtcs}
\label{sec:biblio}

\end{document}